\theoremstyle{definition}
\newtheorem{defin}{Definition}[section]
\newtheorem{rem}[defin]{Remark}
\theoremstyle{plain}
\newtheorem{theor}[defin]{Theorem}
\newtheorem{prop}[defin]{Proposition}
\newtheorem{lemma}[defin]{Lemma}
\newtheorem{corol}[defin]{Corollary}
\newcommand{\mltt}{Martin-L\"of type theory\xspace}
\newcommand{\type}{\ \mathsf{Type}}
\newcommand{\cxt}{\ \mathsf{Cxt}}
\newcommand{\T}{$\mathbb{T}$\xspace}
\newcommand{\Tid}{$\mathbb{T}_0$\xspace}
\newcommand{\Id}[2]{\textup{Id}_{#1}(#2)}
\newcommand{\refl}{\textup{refl}}
\newcommand{\J}{\textup{J}}
\newcommand{\subst}[2]{[^{#1}\!/\!_{#2}]}
\newcommand{\bnd}[1]{{\scriptstyle[#1]}}
\newcommand{\catt}{$\mathsf{C}$\xspace}	\newcommand{\catm}{\mathsf{C}}
\newcommand{\Lt}{$\mathcal{L}$\xspace}	\newcommand{\Lm}{\mathcal{L}}
\newcommand{\Rt}{$\mathcal{R}$\xspace}	\newcommand{\Rm}{\mathcal{R}}
\newcommand{\At}{$\mathcal{A}$\xspace}	\newcommand{\Am}{\mathcal{A}}
\newcommand{\bsch}[2]{#1^*\!#2}
\newcommand{\pr}{\textup{pr}}
\newcommand{\univ}[1]{\langle #1 \rangle}
\newcommand{\pob}{\textup{Path}}
\newcommand{\rar}{\textup{r}}
\newcommand{\mpob}{\textup{Map}}
\newcommand{\PM}{\textup{pm}}
\newcommand{\id}{\textup{id}}
\newcommand{\term}{\mathbf{1}}
\newcommand{\Mt}{$\textup{M}$\xspace}	\newcommand{\Mm}{\textup{M}}
\newcommand{\Idf}{\textup{Id}}
\newcommand{\src}{\textup{s}}
\newcommand{\trg}{\textup{t}}
\newcommand{\cmp}{\textup{m}}
\newcommand{\natto}{\Rightarrow}
\newcommand{\syntc}{$\mathsf{C}(\mathbb{T})$\xspace}
\newcommand{\syntcid}{$\mathsf{C}(\mathbb{T}_0)$\xspace}
\newcommand{\Dm}{\mathcal{D}}
\newcommand{\Dt}{$\mathcal{D}$\xspace}
\newcommand{\preal}{\mathbb{R}_+}
\newcommand{\presc}[2]{\ {}_{#1}\!\!#2}
\newcommand{\xycenter}[2]{\vcenter{\hbox{\xymatrix@#1{#2}}}}
\renewcommand{\phi}{\varphi}
\title{A category-theoretic version of the identity type weak factorization system}
\author{Jacopo Emmenegger}
\date{March 24, 2015}
\begin{document}

\maketitle

\section{Introduction}										\label{sec:intro}

In recent years much interest has been devoted to investigating the relation between
paths in a topological space, and proofs of identity in \mltt.
In particular, Steve Awodey and Michael Warren have suggested in~\cite{AW09}
that a suitable structure to interpret the identity type rules is a weak factorization system,
a structure arising in homotopy theory as an abstraction from the notion of path lifting property
and which is part of the stronger notion of model category~\cite{Qu67,DS95}.
Some coherence issues have then been worked out by Benno van den Berg and Richard Garner in~\cite{vdBG11},
where they introduced the notion of path object category
and proved that it provides a weak factorization system with good enough features
to obtain a sound interpretation of identity types.
Furthermore, Nicola Gambino and Richard Garner have proved in~\cite{GG08} a somewhat inverse result,
stating that the syntactic category (or category of contexts) of a dependent type theory with identity types
carries a weak factorization system structure.

This highlights a deep connection between identity types and weak factorization systems
but, as already pointed out in~\cite{vdBG11}, a gap still remains,
namely that the syntactic category (and in general the models build out from the syntax of \mltt)
does not fit into the kind of models described by Awodey, van den Berg, Garner and Warren,
thus keeping out any possibility of a completeness result for such models.

The goal of this paper is to abstract from the particular setting of the syntactic category
in order to generalize the result of Gambino and Garner
and investigate how it relates with the models presented in~\cite{vdBG11}.
The obtained structure turns out to subsume both
the syntactic category and the homotopy-theoretic models of~\cite{vdBG11}.
In particular, this paper can be regarded as a development of the thesis~\cite{Em13}
written under the supervision of Nicola Gambino.

Our starting point is a particular category, introduced by André Joyal in~\cite{Jo13}, called tribe.
This is a category with a distinguished class of arrows \At
which, roughly speaking, can be thought of as abstract dependent projections.
We shall then consider additional structure on it, defining a tribe with weakly stable path objects,
and prove that such category admits a weak factorization system, thus giving rise to Joyal's notion of h-tribe.

Independently of this work, Benno van den Berg has announced similar results in a recent talk in Oxford~\cite{vdB14},
achieved by considering a structure, called identity tribe, closely related to a tribe with path objects.
The main difference between our two notions lies in the condition of stability under pullbacks
imposed on the factorization of the diagonal arrow.
Nevertheless, their similarities suggest that we can regard them
as leading to a natural enrichment of the tribe structure on a category.

We begin \cref{sec:wspob} recalling some basic definitions and facts about weak factorization systems.
The notion of tribe with weakly stable path objects is then defined,
and its relation with the notions introduced by Joyal and van den Berg is discussed.
The existence of a weak factorization system $(\Lm, \Rm)$ is proved in~\cref{thm:wfs},
and we conclude the section characterizing \Rt
as the class of arrows having a form of transport between fibres over the endpoints of a path.
The existence of a weak factorisation system from an analogous setup has been recently proved by Michael Shulman~\cite{Sh12}.

In \cref{sec:syntc} we briefly recall the definition of the syntactic category \syntc and
prove that, if the underlying type theory \T has identity types,
then \syntc is a tribe with weakly stable path objects.
We then show that, in this case, \cref{thm:wfs} yields exactly
the identity type weak factorization system of~\cite{GG08}.

We conclude in \cref{sec:pobc} recalling the definition of path object category
along with the underlying topological intuition,
and show how to adapt the construction of a weak factorization system in~\cite{vdBG11}
to prove that every path object category is a tribe with weakly stable path objects.

\section{Weak factorization systems and weakly stable path objects}				\label{sec:wspob}

In this section we begin recalling the definition of weak factorization system and a couple of elementary results.
We present then the definition of tribe and introduce the richer structure of tribe with weakly stable path objects,
also discussing its relation with Joyal's h-tribe and van den Berg's identity tribe.
Finally, we prove our main result,
stating that every tribe with weakly stable path objects can be endowed with a weak factorization system.

\begin{defin}
Let \catt be a category.
Given two arrows $f \colon A \to X$ and $g \colon B \to Y$, a \emph{left lifting problem for $f$ over $g$}
(or a \emph{right lifting problem for $g$ over $f$}) is a commutative square
\[\xycenter{=2em}{
	A 	\ar[d]_f 	\ar[r] 	&	 B 	\ar[d]^g 	\\
	X 		\ar[r] 		&	 Y		 	}\]
The arrow $f$ has the \emph{left lifting property (l.l.p.)} with respect to $g$,
if every left lifting problem for $f$ over $g$ has a \emph{diagonal filler},
that is an arrow $j \colon X \to B$ such that the diagram
\[\xycenter{=2em}{
	A 	\ar[d]_f	\ar[r] 	&	 B 	\ar[d]^g	\\
	X 	\ar[r] 	\ar[ur]^j	&	 Y			}\]
commutes.
Symmetrically, $g$ has the \emph{right lifting property (r.l.p.)} with respect to $f$,
if every right lifting problem for $g$ over $f$ has a diagonal filler.
Of course, $f$ has the l.l.p.\ with respect to $g$ if and only if $g$ has the r.l.p.\ with respect to $f$,
we write $f \boxslash g$ to denote this situation.

If \At is a class of arrows, we write $\Am^\boxslash$ for the class of all the arrows that have the r.l.p.\
with respect to every arrow in \At, and $^\boxslash\Am$ for the class of all the arrows that have the l.l.p.\
with respect to every arrow in \At.
We shall also write $f \boxslash \Am$ instead of $f \in {}^\boxslash\!\Am$.
\end{defin}

\begin{defin}
A \emph{weak factorization system (w.f.s.)} on \catt consists of a pair of classes of arrows $(\Lm, \Rm)$ such that:
\begin{enumerate}[label=(\emph{\alph*})]
	\item every arrow $f$ in \catt admits a factorization $f = pi$, where $i \in \Lm$ and $p \in \Rm$,
	\item $\Lm^\boxslash = \Rm$ and $^\boxslash\Rm = \Lm$.
\end{enumerate}
These properties are called Factorization Axiom and Weak Orthogonality Axiom, respectively.
\end{defin}

\begin{lemma} \label{lem:wort}
Let \catt be a category and $\Am \subset \textup{Ar}(\catm)$ a class of arrows in \catt.
Define $\Lm \coloneqq {}^\boxslash\!\Am$ and $\Rm \coloneqq \Lm^\boxslash$.
Thus $\Lm = {}^\boxslash\Rm$.
\end{lemma}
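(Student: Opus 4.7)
The plan is to prove the two inclusions separately, using only the elementary monotonicity/closure properties of the Galois-type operators $(-)^\boxslash$ and ${}^\boxslash(-)$.

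First I would show the easy inclusion $\Lm \subseteq {}^\boxslash\Rm$. Take any $f \in \Lm$ and any $g \in \Rm = \Lm^\boxslash$. By definition of $\Lm^\boxslash$, the arrow $g$ has the r.l.p.\ with respect to every arrow of $\Lm$, hence in particular $f \boxslash g$. Since $g \in \Rm$ was arbitrary, this gives $f \boxslash \Rm$, i.e.\ $f \in {}^\boxslash\Rm$.

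For the reverse inclusion ${}^\boxslash\Rm \subseteq \Lm$, the key intermediate observation is that $\Am \subseteq \Rm$. Indeed, any $a \in \Am$ satisfies $\ell \boxslash a$ for every $\ell \in \Lm = {}^\boxslash\Am$ by the very definition of $\Lm$; that is exactly the condition for $a$ to lie in $\Lm^\boxslash = \Rm$. Now take $f \in {}^\boxslash\Rm$: then $f \boxslash g$ for every $g \in \Rm$, and by the inclusion just established this forces $f \boxslash a$ for every $a \in \Am$, i.e.\ $f \in {}^\boxslash\Am = \Lm$.

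I do not anticipate any real obstacle: the statement is essentially the identity $c(c(c(\Am))) = c(\Am)$ for the closure operator $c = {}^\boxslash((-)^\boxslash)$ arising from a Galois connection, and both inclusions fall out of the definitions in one line each. The only thing worth highlighting in the write-up is the auxiliary remark $\Am \subseteq \Rm$, since that is where one actually uses how $\Rm$ was built from $\Am$ through $\Lm$; everything else is purely formal unfolding of the symbols.
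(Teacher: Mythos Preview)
Your proof is correct and follows essentially the same argument as the paper: both establish $\Lm \subseteq {}^\boxslash\Rm$ directly from the definition of $\Rm = \Lm^\boxslash$, then obtain the reverse inclusion via the auxiliary observation $\Am \subseteq \Rm$ (which the paper derives by ``the same argument'') and the resulting antitonicity ${}^\boxslash\Rm \subseteq {}^\boxslash\Am = \Lm$.
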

\begin{proof}
Let $f \in \Lm$. Since every arrow in \Rt has the r.l.p.\ with respect to all the arrows in \Lt,
in particular every $g \in \Rm$ has the r.l.p. with respect to $f$.
Hence, from the symmetry of lifting properties, $f$ has the l.l.p with respect to any $g \in \Rm$.
Thus, $f \in {}^\boxslash\Rm$.

If we apply the same argument to an arrow in \At, we obtain $\Am \subset \Rm$.
Thus we have the opposite inclusion ${}^\boxslash\Rm \subset {}^\boxslash\!\Am = \Lm$.
\end{proof}

\begin{lemma} \label{lem:plbklp}
Let \catt be a category and $\Am \subset \textup{Ar}(\catm)$ a class of arrows in \catt.
If \catt has pullbacks and \At is closed under base change, then $f \boxslash \Am$
if and only if a diagonal filler exists for every lifting problem of the form
\[\xycenter{=2em}{
	A 	\ar[d]_f 	\ar[r] 	&	 B 	\ar[d]^{\in\Am} 	\\
	X 		\ar@{=}[r]	&	 X		 	}\]
\end{lemma}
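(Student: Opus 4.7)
The ``only if'' direction is immediate from the definition of $f \boxslash \Am$, so the content is in the other direction. The plan is to reduce an arbitrary lifting problem
\[\xycenter{=2em}{
    A \ar[d]_f \ar[r]^u & B \ar[d]^{g} \\
    X \ar[r]_v & Y
}\]
with $g\in\Am$ to one whose bottom arrow is an identity, by exploiting that $\Am$ is closed under pullback.

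First, I would form the pullback of $g$ along $v$, yielding $v^*g \colon \bsch{v}{B} \to X$ together with a projection $\pr \colon \bsch{v}{B} \to B$ satisfying $g \circ \pr = v \circ v^*g$. By the closure hypothesis, $v^*g \in \Am$. The universal property of the pullback, applied to the pair $(u,f)$ (which satisfies $g \circ u = v \circ f$ by commutativity of the original square), produces a unique arrow $w \colon A \to \bsch{v}{B}$ with $\pr \circ w = u$ and $v^*g \circ w = f$. This gives a new lifting problem
\[\xycenter{=2em}{
    A \ar[d]_f \ar[r]^w & \bsch{v}{B} \ar[d]^{v^*g} \\
    X \ar@{=}[r] & X
}\]
whose right side is in $\Am$ and whose bottom side is the identity.

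By the hypothesis, this restricted problem admits a diagonal filler $j \colon X \to \bsch{v}{B}$ with $j \circ f = w$ and $v^*g \circ j = \id_X$. I would then verify that $\pr \circ j \colon X \to B$ solves the original lifting problem: the upper triangle follows from $\pr \circ j \circ f = \pr \circ w = u$, and the lower triangle from $g \circ \pr \circ j = v \circ v^*g \circ j = v \circ \id_X = v$.

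There is no real obstacle here; the argument is essentially the observation that, modulo pullback, every lifting problem is equivalent to one over an identity. The only point to be slightly careful about is recording the two defining equations of $w$ and then using each in the correct triangle at the end.
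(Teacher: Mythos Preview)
Your proof is correct and follows essentially the same approach as the paper: pull back the right-hand arrow along the bottom arrow, use the universal property to induce a map into the pullback, apply the hypothesis to solve the reduced problem, then compose the filler with the projection to $B$. Your write-up is slightly more explicit in checking the two triangles, but the argument is the same.
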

\begin{proof}
If $f \boxslash \Am$, then obviously a diagonal filler exists.
To prove the opposite implication notice that, given a left lifting problem for $f$ over an arrow in \At,
taking the pullback of the bottom and the right-hand arrow yields a left lifting problem with the identity arrow
as the bottom arrow, as illustrated by the following diagram
\[\xycenter{C=3em}{
	A 	\ar[d]_f \ar[r]^{\univ{f,\,h}} 	&	P	\ar[d]\ar[r]	&	 B 	\ar[d]^g 	\\
	X	 	\ar@{=}[r]		&	X	\ar[r]_k 	&	 Y		 	}\]
where the right-hand square is a pullback, $\univ{f,h}$ is the arrow given by its universal property
and the outer square is the original lifting problem.
Since $g \in \Am$ and \At is closed under base change, the left-hand square has a diagonal filler.
Composing it with the base change of $k$ along $g$ yields, in turn,
a diagonal filler for the original lifting problem.
\end{proof}

\begin{defin}											\label{def:tribe}
Let \catt be a category and $\Am \subset \textup{Ar}(\catm)$ a class of arrows in \catt.
The pair $(\catm, \Am)$ is a \emph{tribe} if \catt has a terminal object $\term$ and the following hold:
\begin{enumerate}[label=(\emph{\alph*})]
	\item for every pair of arrows in \catt with the same codomain,
		we have a choice of a pullback square if at least one of them is in \At,	\label{def:tribe:a}
	\item \At is closed under composition and base change,					\label{def:tribe:b}
	\item all the iso and terminal arrows are in \At.					\label{def:tribe:c}
\end{enumerate}
\end{defin}

Given two arrows $p \in \Am$ and $f$ with the same codomain, we write as usual $\bsch{f}{p}$
to denote the (chosen) base change arrow of $p$ along $f$ and similarly for $\bsch{p}{f}$.
Notice that point~\ref{def:tribe:b} of \cref{def:tribe} implies $\bsch{f}{p} \in \Am$.
The arrow defined by the universal property of a pullback is denoted by $\univ{f,g}$.
Furthermore, we call \emph{products} those pullback squares involving a single arrow $p \in \Am$ or two terminal
arrows, \emph{projections} the base change arrows and denote them by
\[\xycenter{C=3.5em}{
	E \times_p E 	\ar[d]^{\pr^p_0}	\ar[r]_(.6){\pr^p_1}	&	E 	\ar[d]^p 	&&
			X \times Y 	\ar[d]^{\pr_0}		\ar[r]_(.6){\pr_1}	&	Y	\ar[d]	\\
	E 			\ar[r]_p 				&	Y			&&
			X			\ar[r]					&	\term		}\]
respectively.
We shall also denote by $f \times g \coloneqq \univ{f \pr_0, g \pr_1}$ the product arrow between products.
Notice that points~\ref{def:tribe:b} and~\ref{def:tribe:c} of \cref{def:tribe}
imply that both the projections are in \At.
We shall drop superscripts from the projections whenever they are clear from the context.

\begin{defin}											\label{def:pobtr}
We say that a tribe $(\catm, \Am)$ has \emph{path objects} if, for every arrow $p \colon E \to Y$ in \At,
we have a choice of a factorization of the diagonal
$\Delta_p \coloneqq \univ{\id_E, \id_E} \colon E \to E \times_p E$,
denoted by
\[\xycenter{C=3.5em}{
	E \ar[r]^(.45){\rar_p}	&	 \pob(p)	\ar[r]^{\partial_p} &	 E \times_p E, 	}\]
such that
\begin{enumerate}[label=(\emph{\alph*})]
	\item $\partial_p \in \Am$,								\label{def:pobtr:a}
	\item every base change of $\rar_p$ along an arrow in \At is in $^{\boxslash}\!\Am$.	\label{def:pobtr:b}
\end{enumerate}
If $p \colon E \to \term$ is a terminal arrow,
we write $\pob(E)$, $\rar_E$ and $\partial_E$,
instead of $\pob(p)$, $\rar_p$ and $\partial_p$, respectively.
\end{defin}

\begin{defin}											\label{def:wspobtr}
We say that a tribe $(\catm, \Am)$ with path objects has \emph{weakly stable path objects} if,
for every arrow $p \colon E \to Y$ in \At and every arrow $f \colon X \to Y$, there exists an arrow
\[	i \colon \bsch{f}{\,\pob(p)} \to \pob(\bsch{f}{p})	\]
such that the diagram
\begin{equation}										\label{wspobtr:sq}
\xycenter{=4em@C=8em}{
	\bsch{f}{E}	\ar@{=}[d]	\ar[r]^(.46){\univ{\bsch{f}{p}, \, \rar_p.\bsch{p}{f}}}	&
		\bsch{f}{\,\pob(p)}	\ar[d]^i
			\ar[r]^(.46){\univ{\bsch{f}{(p.\partial_p^0)}, \, \partial_p.\bsch{(p.\partial_p^0)}{f}}}
			&	\bsch{f}{E} \times_{\bsch{f}{p}} \bsch{f}{E}		\ar@{=}[d]		\\
	\bsch{f}{E}	\ar[r]^(.46){\rar_{\bsch{f}{p}}}	&
		\pob(\bsch{f}{p})    \ar[r]^(.46){\partial_{\bsch{f}{p}}}	&
			\bsch{f}{E} \times_{\bsch{f}{p}} \bsch{f}{E}	\\}
\end{equation}
commutes.
\end{defin}

Joyal defines a h-tribe as a tribe $(\catm, \Am)$ in which every arrow can be factored through an arrow in
${}^\boxslash\!\Am$ followed by an arrow in \At, and such that the class ${}^\boxslash\!\Am$
is closed under base change.
Since it is possible to prove that in a tribe with weakly stable path objects the class
${}^\boxslash\!\Am$ is closed under base change,
the proof of \cref{thm:wfs} implies that every tribe with weakly stable path objects is a h-tribe.

The identity tribe proposed by van den Berg's seems to be instead a tighter notion,
as it is defined as a tribe with path objects
in which the factorization of the diagonal arrow is required
to be stable under pullback along any arrow.
It would appear that this is equivalent, in our formulation,
to requiring the arrow $i$ to be an iso.

According to this remark, every identity tribe is a tribe with weakly stable path objects,
and both of them are h-tribes.

We now introduce a couple of useful definitions.

\begin{defin}											\label{def:mpob}
Let $f \colon V \to Y$ be an arrow in \catt, and let $p \colon Y \to X$ be an arrow in \At.
The pullback of $f$ along $\partial^0_p$ will be denoted by
\[\xycenter{=3em@C=4em}{
	\mpob_p(f)	\ar[d]^{\PM^{p,f}_0}	\ar[r]_{\PM^{p,f}_1}	&	\pob(p) \ar[d]^{\partial^0_p}	\\
	V 		\ar[r]_f 					&	Y				}\]
and the object $\mpob_p(f)$ will be called \emph{mapping path object of $f$ along $p$}.
As already said for products, we shall usually drop superscripts from the base change arrows.

We say that \emph{$f$ has a transport arrow over $p$} if the following square
\[\xycenter{=3em@C=4em}{
	V 	\ar[d]_{\univ{\id_V, \rar_p.f}}		\ar@{=}[r]	&	V 	\ar[d]^f 	\\
	\mpob_p(f)		\ar[r]_(.55){\partial^1_p.\PM_1}		&	Y			}\]
has a diagonal filler $j \colon \mpob_p(f) \to V$.

If $p \colon Y \to \term$ is a terminal arrow,
we write $\mpob(f)$ instead of $\mpob_p(f)$,
and simply say ``mapping path object of $f$'' and ``transport arrow for $f$''.
\end{defin}

\begin{rem}											\label{rem:trglift}
Notice that the mapping path object $\mpob(f)$ for an arrow $f \colon V \to Y$ always exists.
If moreover $f$ has a transport arrow $j \colon \mpob(f) \to V$,
given  two arrows $e : W \to V$ and $u \colon W \to \pob(Y)$
such that $f.e = \partial^0_Y.u$, we can define the arrow
$\univ{e, u} \colon W \to  \mpob(f)$ and compose it with the transport arrow $j$,
thus obtaining an arrow $e' \colon W \to V$ such that $f.e' = \partial^1_Y.u$,
i.e.\ an arrow which we can think of as a transport of $e$ along the path $u$.
\end{rem}

\begin{lemma}											\label{lem:transp}
Let $(\catm, \Am)$ be a tribe with path objects and
$E \overset{q}{\longrightarrow} Y \overset{p}{\longrightarrow} X$ be two arrows in \At.
Then the arrow $\univ{id_E, \, \rar_p.q} \colon E \to \mpob_p(q)$ is in ${}^{\boxslash}\!\Am$.
\end{lemma}
\begin{proof}
Let us consider the following commutative diagram
\[\xycenter{=3em@C=5em}{
	E 		\ar[d]^q 		\ar[r]_(.4){\univ{\id_E, \,\rar_p.q}}
		&	\mpob_p(q)	\ar[d]^{\PM_1}	\ar[r]_{\PM_0}	 &	  E 	\ar[d]^q 	\\
	Y \ar[r]_{\rar_p} 	&	 \pob(p) 	\ar[r]_{\partial^0_p} 		& 	Y	}\]
where the right-hand square defines the mapping path object of $q$ along $\partial^0_p$.
Since the composites of the horizontal arrows are identities, the left-hand square is a pullback.
Point~\ref{def:tribe:b} of \cref{def:tribe} thus implies $\PM_1 \in \Am$ and,
in turn, point~\ref{def:pobtr:b} of \cref{def:pobtr} implies
$\univ{\id_E, \rar_p.q} \boxslash \Am$.
\end{proof}

An immediate consequence of this Lemma is that every arrow in \At has a transport arrow.

\begin{corol}											\label{corol:transp}
In the hypothesis of the previous Lemma, the square
\[\xycenter{=3em}{
	E 	\ar[d]_{\univ{id_E, \, \rar_p.q}}	\ar@{=}[r]	&	E 	\ar[d]^q 		\\
	\mpob_p(q)		\ar[r]_(.6){\partial^1_p.\PM_1}		&	Y				}\]
has a diagonal filler $j \colon \mpob_p(q) \to E$.
\end{corol}

The following Lemma gives a characterization of the arrows in the class ${}^\boxslash\!\Am$.

\begin{prop}											\label{prop:lftcl}
Let  $(\catm, \Am)$ be a tribe with path objects and $f \colon V \to Y$ an arrow in \catt.
Then $f \boxslash \Am$ if and only if
there exist two arrows $r \colon Y \to V$ and $\phi \colon Y \to \pob(Y)$ such that
\begin{enumerate}[label=(\roman*)]
\item $r.f = \id_V$,
\item $\partial_Y.\phi = \univ{f.r, \id_Y}$,
\item $\phi.f = \rar_Y.f$.
\end{enumerate}
\end{prop}
\begin{proof}
If $f \boxslash \Am$ then the two arrows are obtained as diagonal fillers of the following
commutative squares:
\[\xycenter{=3em}{
	V 	\ar[d]_f 	\ar@{=}[r]		&	V 	\ar[d]	&&
		V 	\ar[d]_f 	\ar[r]^{\rar_Y.f}		&	\pob(Y)	\ar[d]^{\partial_Y}	\\
	Y	\ar[r]	\ar@{-->}[ur]^r 	&	\term		&&
		Y	\ar[r]_{\univ{f.r, \, \id_Y}}	\ar@{-->}[ur]^{\phi}		&	Y \times Y	}\]
Vice versa, suppose we are given a lifting problem
\[\xycenter{=3em}{
	V 	\ar[d]_f 	\ar[r]^g	&	E 	\ar[d]^q		\\
	Y		\ar@{=}[r]	&	Y 				}\]
where $q \in \Am$.
Since $\partial^0_Y.\phi = f.r = q.g.r$,
we can define the arrow $\univ{g.r, \phi} \colon Y \to \mpob(q)$.
A diagonal filler is then given by $j \univ{g.r, \phi} \colon Y \to E$,
where $j \colon \mpob(q) \to E$ is the transport arrow for $q$ given by \cref{corol:transp}.
The general statement follows from \cref{lem:plbklp}.
\end{proof}

We can use this characterization to prove a generalization of \cref{lem:transp}.

\begin{lemma}											\label{lem:depel}
Let $(\catm, \Am)$ be a tribe with weakly stable path objects and
$V \overset{f}{\longrightarrow} Y \overset{p}{\longrightarrow} X$ be two arrows in \catt.
If $p \in \Am$ then $\univ{\id_V, \rar_p.f} \colon V \to \mpob_p(f)$ is in ${}^\boxslash\!\Am$.
\end{lemma}
\begin{proof}
To simplify notation let us define $M \coloneqq \mpob_p(f)$.
Thanks to \cref{prop:lftcl} and the fact that $\PM_1 \univ{\id_V, \, \rar_p.f} = \id_V$,
it is enough to provide an arrow $\phi \colon M \to \pob(M)$ such that
\begin{equation}										\label{depel:req}
\partial_M.\phi = \univ{\univ{\id_V, \rar_p.f}.\PM_1, \id_M}
\qquad\text{and}\qquad
\phi.\univ{\id_V, \rar_p.f} = \rar_M.\univ{\id_V, \rar_p.f}.
\end{equation}

First of all, let us observe that there is an arrow $v \colon \pob(\PM_0) \to \pob(M)$ obtained
as diagonal filler in
\begin{equation}										\label{depel:v}
\xycenter{=3em@C=5em}{
	M 	\ar[d]_{\rar_{\PM_0}}	\ar[r]^{\rar_M}	&	\pob(M)	\ar[d]^{\partial_M}		\\
	\pob(\PM_0)	\ar[r]_{\univ{\partial^0_{\PM_0},\, \partial^1_{\PM_0}}}
				\ar@{-->}[ur]^v				&	M \times M 		}
\end{equation}
Furthermore, since $\PM_0 = \bsch{f}{\,\partial_p}$, \cref{def:wspobtr} implies the existence of
an arrow
\[	i \colon \bsch{f}{\,\pob(\partial^0_p)} \to \pob(\PM_0).	\]
We can thus exploit the universal property of pullback to define an arrow
$M \to \bsch{f}{\,\pob(\partial^0_p)}$ and compose it with $i$ and $v$ to get the arrow $\phi$.

Let $\psi \colon \pob(p) \to \pob(\partial^0_p)$ be a diagonal filler of the square
\begin{equation}										\label{depel:psi}
\xycenter{=3em@C=5em}{
	Y 	\ar[d]_{\rar_p}		\ar[r]^{\rar_{\partial^0_p}.\rar_p}				&
				\pob(\partial^0_p)		\ar[d]^{\partial_{\partial^0_p}}		\\
	\pob(p)	\ar[r]_(.4){\univ{\rar_p.\partial^0_p, \, \id}}	\ar@{-->}[ur]^\psi	&
				\pob(p) \times_{\partial^0_p} \pob(p)						}
\end{equation}
and consider the following diagram
\begin{equation}										\label{depel:fill}
\xycenter{R=2em@C=5em}{
	M 	\ar[d]_{\PM_0}	\ar[r]^{\PM_1}		&	\pob(p)	\ar@/^1pc/[dr]^\psi	&	\\
	V 	\ar@/_1pc/[dr]^{\univ{\id_V,\, \rar_p.f}}	\ar@/_2pc/[ddr]_{\id_V}			&
				\bsch{f}{\,\pob(\partial^0_p)}	\ar[d]	\ar[r]				&
					\pob(\partial^0_p)		\ar[d]^{\partial^0_{\partial^0_p}}	\\
	&	M 	\ar[d]^{\PM_0}	\ar[r]_{\PM_1}		&	\pob(p)	\ar[d]^{\partial^0_p}	\\
	&	V 	\ar[r]_f 	&	Y 	}
\end{equation}
where the two squares are pullbacks.
Commutativity of~\eqref{depel:psi} implies $\partial^0_p.\partial^0_{\partial^0_p}.\psi.\PM_1 = f.\PM_0$,
therefore there exists the arrow
$\univ{\PM_0, \psi.\PM_1} \colon M \to \bsch{f}{\,\pob(\partial^0_p)}$ and we can define
$\phi \coloneqq v.i.\univ{\PM_0, \psi.\PM_1}$.

Commutativity of~\eqref{wspobtr:sq} and~\eqref{depel:v} imply
\[	\partial_M.v.i = \univ{\bsch{f}{(\partial^0_p.\partial^0_{\partial^0_p})},
				\partial_{\partial^0_p}.\bsch{(\partial^0_p.\partial^0_{\partial^0_p})}{f}}
	\qquad\text{and}\qquad
	v.i.\univ{\PM_0, \rar_{\partial^0_p}.\PM_1} = \rar_M		\]
Therefore the equations in~\eqref{depel:req} follow from commutativity of~\eqref{depel:psi} and~\eqref{depel:fill}
as follows:
\[\begin{split}
\univ{\bsch{f}{(\partial^0_p.\partial^0_{\partial^0_p})}, \partial_{\partial^0_p}.
	\bsch{(\partial^0_p.\partial^0_{\partial^0_p})}{f}}.\univ{\PM_0, \psi.\PM_1}
	& =		\univ{\PM_0, \partial_{\partial^0_p}.\psi.\PM_1}					\\
	& =		\univ{\PM_0, \univ{\rar_p.f.\PM_0, \PM_1}}						\\
	& =		\univ{\univ{\PM_0, \rar_p.f.\PM_0}, \univ{\PM_0, \PM_1}}			\\
	& =		\univ{\univ{\id_V, \rar_p.f}.\PM_0, \id_M}
\end{split}\]
and
\[\begin{split}
\univ{\PM_0, \psi.\PM_1} \univ{\id_V, \rar_p.f}
		& =		\univ{\id_V, \rar_{\partial^0_p}.\rar_p.f}					\\
		& =		\univ{\PM_0, \rar_{\partial^0_p}.\PM_1}.\univ{\id_V, \rar_p.f}
\end{split}\]
\end{proof}

We are now in a position to prove the existence of a weak factorization system.

\begin{theor}											\label{thm:wfs}
Every tribe with weakly stable path objects $(\catm, \Am)$
admits a weak factorization system $(\Lm, \Rm)$
defined by $\Lm \coloneqq {}^{\boxslash}\!\Am$ and $\Rm \coloneqq \Lm^{\boxslash}$.
\end{theor}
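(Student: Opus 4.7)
The weak orthogonality axiom follows immediately: $\Lm^\boxslash = \Rm$ by definition, and ${}^\boxslash\Rm = \Lm$ is exactly \cref{lem:wort}. Only the factorization axiom remains. I would factor an arbitrary $f : X \to Y$ through the mapping path object $\mpob(f) := \mpob_{t_Y}(f)$ built from the terminal arrow $t_Y : Y \to \term$, which lies in $\Am$ by \cref{def:tribe}. Concretely, set
\[
\alpha := \univ{\id_X,\, \rar_{t_Y}\!\circ f} : X \to \mpob(f), \qquad
\beta := \partial^1_{t_Y}\!\circ \PM_1 : \mpob(f) \to Y.
\]
The identity $\beta\alpha = f$ reduces, via the universal property of $\mpob(f)$, to $\partial^1_{t_Y}\rar_{t_Y} = \id_Y$, which follows from $\partial_{t_Y}\rar_{t_Y} = \Delta_{t_Y}$.

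For $\beta \in \Rm$, the trivial inclusion $\Am \subseteq \Lm^\boxslash = \Rm$ reduces the task to proving $\beta \in \Am$. Using pullback pasting, I would present $\mpob(f)$ alternatively as the pullback of $\partial_{t_Y} \in \Am$ along $f \times \id_Y : X\times Y \to Y\times Y$: this exhibits the projection $\mpob(f) \to X\times Y$ as an $\Am$-arrow (a base change of $\partial_{t_Y}$), and $\beta$ becomes its composition with $\pr_2 \in \Am$, whence $\beta \in \Am$ by closure of $\Am$ under composition.

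The crux is to show $\alpha \in \Lm = {}^\boxslash\Am$. By \cref{lem:plbklp}, it suffices to produce a diagonal filler for every lifting square
\[\xycenter{=2em}{ X \ar[d]_\alpha \ar[r]^h & B \ar[d]^{g} \\ \mpob(f) \ar@{=}[r] & \mpob(f) }\]
with $g \in \Am$. My plan is to apply the stable path objects condition (\cref{def:spobtr}) with the $\Am$-arrow $g$ and the arrow $\alpha$, which share codomain $\mpob(f)$. This supplies the pullback $F := B \times_{\mpob(f)} X$, an $\Am$-projection $\bsch{\alpha}{g} : F \to X$ with a section $s : X \to F$ induced by $h$ via the universal property, and the comparison map $i : \mpob_g(\bsch{g}{\alpha}) \to \pob(\bsch{\alpha}{g})$ together with its commutativity diagram. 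Combining these data with the lifting property of a base change of $\rar_{\bsch{\alpha}{g}}$ along an $\Am$-arrow — available by condition~\ref{def:pobtr:b} of \cref{def:pobtr} since $\bsch{\alpha}{g} \in \Am$ — I would construct the filler $j : \mpob(f) \to B$ by post-composing an intermediate lift with the other pullback projection $\bsch{g}{\alpha}$. The main obstacle lies precisely in organising this diagram chase so that the resulting $j$ satisfies both triangles $gj = \id_{\mpob(f)}$ and $j\alpha = h$. This compatibility is exactly what the arrow $i$ of \cref{def:spobtr} encodes, and is why the stability condition was formulated in that particular (weak) form.
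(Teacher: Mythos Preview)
Your proof is correct and matches the paper up through the factorization $f = \beta\alpha$ and the verification that $\beta \in \Am$ (the pullback-pasting argument is exactly diagram~\eqref{fact:def2}). The gap is in the final step, showing $\alpha \in {}^\boxslash\!\Am$.

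You apply \cref{def:spobtr} to the pair $(g,\alpha)$ arising from the lifting problem itself. But the output of that application --- the pullback $F = B \times_{\mpob(f)} X$, the map $i \colon \mpob_g(\bsch{g}{\alpha}) \to \pob(\bsch{\alpha}{g})$, and any lift obtained from $\rar_{\bsch{\alpha}{g}}$ --- all lives over $X$, not over $\mpob(f)$. Post-composing with $\bsch{g}{\alpha} \colon F \to B$ would give a filler only if you already had a map $\mpob(f) \to F$; but a map into $F = B \times_{\mpob(f)} X$ over the identity of $\mpob(f)$ requires precisely a map $\mpob(f) \to B$ over $g$, which is what you are trying to construct. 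So the sketch is circular at this point, and I do not see how to rescue it with that choice of stability instance.

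The paper proceeds quite differently. It first builds an \emph{approximate} filler $k \colon \mpob(f) \to E$ satisfying the upper triangle but not the lower one (equations~\eqref{fact:uptr}--\eqref{fact:wrfib}), using only the lifting property of the base change of $\rar_Y$. Stability is then applied not to $(g,\alpha)$ but to the \emph{fixed} pair $(\partial^0_Y, f)$ --- the very square defining $\mpob(f)$ --- which produces the comparison $i \colon \mpob_{\partial^0_Y}(m) \to \pob(p)$ for $p = \PM_0 \colon \mpob(f) \to X$. This yields a ``path'' $\psi \colon \mpob(f) \to \pob(p)$ from $qk$ to $\id_{\mpob(f)}$. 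The approximate filler is then corrected by transporting it along $\psi$ via \cref{lem:cleib}, a separate Leibniz-type lemma you do not invoke. The key conceptual point you are missing is this two-stage ``approximate-then-transport'' strategy, together with applying stability to the defining data of $\mpob(f)$ rather than to the individual lifting problem.
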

\begin{proof}
The factorization of an arrow $f \colon V \to Y$ is given by
\[	f = (\partial^1_Y.\PM_1).\univ{\id_V, \rar_Y.f},	\]
where $\univ{\id_V, \rar_Y f}$ is the arrow defined by the universal property of the
mapping path object of $f$:
\begin{equation}										\label{wfs:def1}
\xycenter{C=3em}{
	V 	\ar@/_1pc/[ddr]_{\id_X}		\ar@/^1pc/[drr]^{\rar_Y.f}	\ar[dr]	&&	\\
	&	\mpob(f) 	\ar[d]	\ar[r]	&	\pob(Y) 		\ar[d]^{\partial^0_Y}	\\
	&	V 		\ar[r]_f 			&	Y 	}
\end{equation}
\cref{lem:depel} implies $\univ{\id_V, \rar_Y.f} \in \Lm$.
In order to show that $\partial^1_Y.\PM_1$ is in \Rt observe that,
for the stability property of pullbacks, the following square is a pullback
\begin{equation}										\label{wfs:def2}
\xycenter{=3.5em}{
	\mpob(f)		\ar[d]^{\univ{\PM_0, \, \partial^1_Y.\PM_1}}		\ar[r]_{\PM_1}		&
						\pob(Y) 		\ar[d]^{\partial_Y}			\\
	V \times Y 		\ar[r]_{f \times \id_Y} 		&	Y \times Y 			}
\end{equation}
In particular, point~\ref{def:tribe:b} of \cref{def:tribe} implies
$\univ{\PM_0, \, \partial^1_Y.\PM_1} \in \Am$ and, in turn,
$\partial^1_Y.\PM_1 \in \Am \subset \Rm$.

Finally, the Weak Orthogonality Axiom follows immediately from \cref{lem:wort}
\end{proof}

We now give a characterization of the arrows in the right class \Rt
as we already did for the left class in \cref{prop:lftcl}.

\begin{prop}											\label{prop:rgtcl}
Let $(\catm, \Am)$ be a tribe with weakly stable path objects and
$f \colon V \to Y$ an arrow in \catt.
Then $f \in \Rm \coloneqq (^\boxslash\!\Am)^\boxslash$ if and only if
it has a transport arrow, that is, a diagonal filler for the following square.
\begin{equation}										\label{rgtcl:trsq}
\xycenter{=3em@C=3.5em}{
	V 	\ar[d]_{\univ{\id_V,\, \rar_Y.f}}	\ar@{=}[r]	&	V 	\ar[d]^f 	\\
	\mpob(f)			\ar[r]_(.6){\partial^1_Y.\PM_1}		&	Y 		}
\end{equation}
\end{prop}
\begin{proof}
If $f \in \Rm$, then the existence of a diagonal filler for~\eqref{rgtcl:trsq} is ensured by
\cref{lem:depel}, which implies $\univ{\id_V,\, \rar_Y.f} \boxslash f$.

Vice versa, consider the following lifting problem
\begin{equation}										\label{rgtcl:lp}
\xycenter{=3em}{
	W 	\ar[d]_g 	\ar[r]^h 	&	V 	\ar[d]^f 	\\
	Y 		\ar@{=}[r]	&	Y 			}
\end{equation}
where $g \in {}^\boxslash\!\Am$.
Because of the characterization given by \cref{prop:lftcl},
we know that there are $r \colon Y \to W$ and $\phi \colon Y \to \pob(Y)$ such that
\[		r.g = \id_W,		\qquad			\partial_Y.\phi = \univ{g.r, \id_Y}
		\qquad\text{and}\qquad			\phi.g = \rar_Y.g.			\]
This allows us to define the arrow $\univ{h.r, \phi} \colon Y \to \mpob(f)$.
A diagonal filler for~\eqref{rgtcl:lp} is then given by $j.\univ{h r, \phi} \colon Y \to V$,
where $j \colon \mpob(f) \to V$ is a transport arrow for $f$.
\end{proof}

Notice also that, if $f$ has a transport arrow $j \colon \mpob(f) \to V$,
then it has a transport arrow over every arrow $p \colon Y \to X$ in \At.
This is a consequence of the previous Proposition and of \cref{lem:depel},
which together imply $\univ{\id_V, \rar_p.f} \boxslash f$.

\section{The syntactic category of \mltt}							\label{sec:syntc}

We consider a version of \mltt \cite{MLo75,MLo84}, which we denote by \Tid,
equipped the usual structural rules and with the only identity type as type former,
whose rules are given in \cref{tab:idrules}.
For more details about the syntax of \mltt see~\cite{NPS90}.
We write $\bnd{x}B$ ($\bnd{x}b$) to denote the type $B$ (term $b$) with the variable $x$ bound and
$B\subst{a}{x}$ ($b\subst{a}{x}$) to denote the type (term) obtained by
substituting the term $a$ for all the free occurrences of the variable $x$ in $B$ (in $b$).

As usual, we assume that derivations given by all the rules we present can be made also relative
to chains of assumptions, called contexts.
Recall that a \emph{context} $\Phi$ of length $n \geq 0$ is a list of declarations of the form
\[	\Phi = (x_1 : A_1,\, x_2 : A_1,\dots ,\, x_n:A_n)	\]
such that all the variables $x_1, \dots, x_n$ are different and the following judgments are derivable.
\begin{align}
& A_1 \type, 										\nonumber \\
& (x_1 : A_1)\ A_1 \type,								\nonumber \\
& \qquad \vdots 										\label{eq:cxt} \\
& (x_1 : A_1, \dots ,\, x_{n-1} : A_{n-1})\ A_n \type. 					\nonumber
\end{align}
When $n = 0$ we have the empty context $()$.
We write $\Phi \cxt$ to mean that $\Phi$ is a context, that is,
as a shorthand for the judgments in \eqref{eq:cxt}.
If the first judgment $A_1 \type$ (and then all the others) depends on a context $\Gamma$,
we write $(\Gamma)\, \Phi \cxt$ and say that $\Phi$ is a \emph{dependent context}.
Notice that, given a dependent context $(\Gamma)\, \Phi \cxt$, we then obtain $(\Gamma, \Phi) \cxt$
by concatenating the judgments they stands for.

\begin{table}[ht]
\caption{Derivation rules for identity types}							\label{tab:idrules}
\centering
\begin{tabular}{c}
\toprule \\
\multicolumn{1}{l}{Formation} \\ \\
\begin{prooftree}
	A \type 	\qquad	 a : A \qquad b : A \justifies \Id{A}{a,b} \type
\end{prooftree}
\\ \\
\multicolumn{1}{l}{Introduction} \\ \\
\begin{prooftree}
	A \type 	\qquad	a : A \justifies \refl_A(a) : \Id{A}{a,a}
\end{prooftree}
\\ \\
\multicolumn{1}{l}{Elimination} \\ \\
\begin{prooftree}
	\[ (x : A,\, y : A,\, u : \Id{A}{x,y},\, z : \Theta)\ C \type
		\justifies \thickness=0em
	p : \Id{A}{a,b}	\qquad			%
		(x' : A,\, z' : \Theta\subst{x',x',\refl(x')}{x,y,u})\ d : C\subst{x',x',\refl(x'),z'}{x,y,u,z}	\]
	\justifies
	(w : \Theta\subst{a,b,p}{x,y,u})\ \J(a, b, p, \bnd{x',z'}d, z) : C\subst{a,b,p,w}{x,y,u,z}
\end{prooftree}
\\ \\
\multicolumn{1}{l}{Computation} \\ \\
\begin{prooftree}
	\[ (x : A,\, y : A,\, u : \Id{A}{x,y},\, z : \Theta)\ C \type
		\justifies \thickness=0em
	a : A 	\qquad
		(x' : A,\, z' : \Theta\subst{x',x',\refl(x')}{x,y,u})\ d : C\subst{x',x',\refl(x'),z'}{x,y,u,z}	\]
	\justifies
	(w : \Theta\subst{a,a,\refl(a)}{x,y,u})\
			\J(a, a, \refl(a), \bnd{x',z'}d, w) = d\subst{a,w}{x',z'} : C\subst{a,a,\refl(a),w}{x,y,u,z}
\end{prooftree}
\\ \\
\bottomrule
\end{tabular}
\end{table}

Since we are working in a theory with the only identity type, we assume an elimination rule,
called parametric, which is stronger than that one presented in~\cite{NPS90},
for it allows the presence of a parametric context $\Theta$ within the derivation.
This parametric elimination can be deduced from the usual one in presence of $\Pi$ types~\cite{Ja99}.

In addition to the rules in~\cref{tab:idrules}, there are also rules expressing the congruence of definitional
equality with respect to the constants Id, $\refl$ and $\J$, which we leave implicit.
Instead, we explicitly state in~\cref{tab:idsubst} the rules for substituting terms for variables in such constants.
As usual, stating these rules, we assume that no occurrence of free variables in $a$
becomes bound when substituting $a$ in another term, just renaming variables if needed.
To increase readability, we have omitted from the premises of the last rule of~\cref{tab:idsubst} the judgment
\[ (x : A, y_1 : B, y_2 : B, u : \Id{B}{y_1, y_2}, z : \Theta)\ C \type. \]
%

\begin{table}[ht]
\caption{Substitution rules for identity types}							\label{tab:idsubst}
\resizebox{\columnwidth}{!}{%
\begin{tabular}{c}
\toprule \\
\multicolumn{1}{l}{Id-substitution}\\ \\
\begin{prooftree}
	(x : A)\ B \type \qquad (x : A)\ b_1 : B \qquad (x : A)\ b_2 : B \qquad a : A
	\justifies
	\Id{B}{b_1, b_2}\subst{a}{x} = \Id{B\subst{a}{x}}{b_1\subst{a}{x}, b_2\subst{a}{x}} \type
\end{prooftree}
\\ \\
\multicolumn{1}{l}{\refl-substitution}\\ \\
\begin{prooftree}
	(x : A)\ B \type \qquad (x : A)\ b : B \qquad a : A
	\justifies
	\refl_{B}(b)\subst{a}{x} = \refl_{B\subst{a}{x}}(b\subst{a}{x}) :	%
								\Id{B\subst{a}{x}}{b\subst{a}{x}, b\subst{a}{x}}
\end{prooftree}
\\ \\
\multicolumn{1}{l}{\J-substitution}\\ \\
\begin{prooftree}
	\[\[ (x : A)\ B \type	\quad	(x : A)\ b_1 : B 	\quad 	(x : A)\ b_2 : B 	\quad	%
									(x : A)\ p : \Id{B}{b_1, b_2}
	\justifies \thickness=0em	
	a : A \qquad (x : A, y : B, z' : \Theta\subst{y,y,\refl(y)}{y_0,y_1,u})\	%
									d : C\subst{y,y,\refl(y),z'}{y_0,y_1,u,z} \]
	\justifies
	(w : \Theta\subst{b_1,b_2,p}{y_0,y_1,u}\subst{a}{x})\ %
								\J(b_1, b_2, p, \bnd{y,z'}d, z)\subst{a,w}{x,z} =
	\phantom{C(a, b_1\subst{a}{x}, b_2\subst{a}{x}, p\subst{a}{x}, w)} \]
	\justifies \thickness=0em					\phantom{(\Theta(b_1(a), b_2(a)))}
		= \J(b_1\subst{a}{x}, b_2\subst{a}{x}, p\subst{a}{x}, \bnd{y,z'}(d\subst{a}{x}), w) : %
				C\subst{b_1,b_2,p,w}{y_0,y_1,u,z}\subst{a}{x}
\end{prooftree}
\\
\phantom{a}\\
\bottomrule
\end{tabular} }
\end{table}

Using the structural rules of \mltt it is possible to define a category,
called syntactic category, whose construction we briefly recall.
For details see~\cite{Pi01}.

Given two contexts $\Phi$ and $\Psi = (y_1 : B_1, \dots, y_m : B_m)$, a \emph{context morphism}
from $\Phi$ to $\Psi$ in an m-tuple $(b_1, \dots, b_m)$ such that all the following judgments are derivable.
\begin{align*}
& (\Phi)\ b_1 : B_1,						\\
& (\Phi)\ b_2 : B_2\subst{b_1}{y_1}, 		\\
& \qquad \vdots \\
& (\Phi)\ b_m : B_m\subst{b_1, \dots, b_{m-1}}{y_1, \dots, y_{m-1}}.
\end{align*}
We write $(\Phi)\ (b_1, \dots, b_m) : \Psi$ to mean that $(b_1, \dots, b_m)$
is a context morphism from $\Phi$ to $\Psi$.

An important class of context morphisms, called \emph{dependent projections},
are those which drop some variables of a context, that is, context morphisms of the form
\[ (\Phi, \Psi)\ (x_1, \dots, x_n) : \Phi \]
where $\Psi$ is a context depending on $\Phi$, and $(x_1, \dots, x_n)$ are the variables in $\Phi$.

We can then define an equivalence relation on contexts and context morphisms, which extends definitional equality,
identifying two contexts (contexts morphisms) if they coincide up to variable renaming.

\begin{defin}
Objects and arrows of the \emph{syntactic category} \syntc of a type theory \T are equivalence classes of
contexts and context morphisms respectively.
Identity arrows are defined in the obvious way, whilst the composition of two arrows
$a \colon \Phi \to \Theta$ and $b \colon \Theta \to \Psi$ is defined by means of
substitution within representatives of equivalence classes:
if $(\Phi)\ (a_1, \dots, a_k) : \Theta$ and $(\Theta)\ (b_1, \dots, b_m) : \Psi$ are such representatives and
$z_1, \dots, z_k$ are the free variables in the context $\Theta$, the composite arrow is the equivalence class of
the context morphism given by the judgments
\[ (\Phi)\ b_i\subst{a_1, \dots, a_k}{z_0, \dots, z_k} : %
	B_i\subst{b_1, \dots, b_{i-1}}{y_1, \dots, y_{i-1}}\subst{a_1, \dots, a_k}{z_0, \dots, z_k}, \]
for $i = 1, \dots, m$, where $\Psi = (y_1 : B_1,\dots,\, y_m : B_m)$.
\end{defin}

In what follows, we will not use equivalence classes but just identify two contexts (context morphisms)
up to variable renaming.
Also, since we will only work with contexts, and not with types, we denote the
former with capital letters of Latin alphabet, instead of Greek one.

Gambino and Garner have proved in~\cite{GG08} that, exploiting the rules for the identity type,
it is possible to endow the syntactic category \syntcid associated to the type theory \Tid
with a weak factorization system, called identity type weak factorization system.

\begin{theor}[Gambino and Garner]
Let \Dt denotes the set of all the dependent projections in \syntcid.
The pair $(\Lm, \Rm)$, where $\Lm = {}^\boxslash\Dm$ and $\Rm = \Lm^\boxslash$,
is a weak factorization system on \syntcid.
\end{theor}

The Weak Orthogonality Axiom follows from \cref{lem:wort} whereas,
given two contexts $X$ and $Y$, the factorization of a context morphism
$(x : X)\ f : Y$ is
\begin{equation}										\label{eq:itwfs}
\xycenter{C=6em}{
	X	\ar[r]^(.3){(x,\, f,\, \refl(f))}	&	(x : X,\, y : Y,\, u : \Id{Y}{f, y})
								\ar[r]^(.7){(y)}	&	Y	}
\end{equation}
where $\Id{Y}{f,y}$ is the so-called identity context, which can be defined from the rules for identity types
as shown in~\cite{Ga09}.
The right-hand arrow is obviously a dependent projection, the hard part of the proof is in proving that the
left-hand one is in \Lt.

We now prove that the pair (\syntcid, \Dt) is a tribe with weakly stable path objects, and that the weak
factorization system given by \cref{thm:wfs} is the identity type weak factorization system.

\begin{prop}											\label{prop:sc2wspob}
The pair \textup{(\syntcid, \Dt)} is a tribe with weakly stable path objects.
In particular, the choice of path objects is given by identity contexts.
\end{prop}
\begin{proof}
First of all, the empty context $() \cxt$ is a terminal object in \syntcid,
terminal arrows are dependent projections dropping all the variables of a context,
whilst identity arrows are dependent projections which do not drop any variable.

Given a dependent projection $(x : X, y : E) \to (x : X)$ and a context morphism $t \colon X' \to X$,
the following square is a pullback
\begin{equation}										\label{sc2wspob:plbk}
\xycenter{=3em}{
	(x' : X',\, y' : E\subst{t}{x})	\ar[d]	\ar[r]^(.55){(t,\, y')}	&	(x : X,\, y : E)	\ar[d]	\\
	(x' : X')				\ar[r]^t		&	(x : X)				}
\end{equation}
where the two vertical arrows are dependent projections.
Indeed, if $f \colon Z \to X'$ and $(g_0, g_1) \colon Z \to (X, E)$ are context morphisms such that $g_0 = t.f$,
the universal arrow is the context morphism $(f, g_1)\colon Z \to (X,E\subst{t}{x})$.

Finally, the composite of two dependent projections is obviously a dependent projection.
Therefore, (\syntcid, \Dt) is a tribe.

We now show how to associate a path object to every dependent projection.
Let
\[ p \colon (x : X,\, y : E) \to (x : X), \]
be a dependent projection.
The diagonal $\Delta_p$  is the context morphism
\[ (x, y, y) \colon  (x : X, y : E) \to  (x : X, y_1 : E, y_2 : E). \]
Define
\[ \pob(p) := (x : X,\, y_1 : E,\, y_2 : E,\, u : \Id{E}{y_1, y_2}), \]
\[ \rar_p := (x, y, y, \refl(y)) \qquad\text{and}\qquad \partial_p := (x, y_1, y_2), \]
where $\Id{E}{y_1, y_2}$ is the (dependent) identity context of the context $E$,
for which one can derive rules analogous to those given in \cref{tab:idrules},
as proved in~\cite{Ga09}.
These definitions yield of course a factorization of the diagonal,
and $\partial_p \in \Dm$ follows from definition.

Let us prove that every left lifting problem for a pullback of $\rar_p$ over \Dt has a solution in \syntcid.
Let $(v : \pob(p),\, z : B) \to \pob(p)$ be a dependent projection
(where we use $v : \pob(p)$ as a shorthand for $(x,y_1,y_2,u) : \pob(p)$) and
\[\xycenter{R=3em@C=5em}{
	(x : X,\, y : E,\, z' : B\subst{x, y, y, \refl(y)}{v})	\ar[d]		\ar[r]^(.55)k		&
								(v : \pob(p),\, z : B)	\ar[d]		\\
	(x : X,\, y : E)	\ar[r]^{\qquad \rar_p}	& 	\pob(p) }				\]
a pullback, where $k = (\rar_p, z') = (x, y, y, \refl(y), z')$.

Let us then consider the following lifting problem
\begin{equation}										\label{sc2wspob:lp}
\xycenter{=3em}{
	(x : X,\, y : E,\, z' : B\subst{x, y, y, \refl(y)}{v})	\ar[d]_k	\ar[r]^(.55)d
						&	(v : \pob(p),\, z : B,\, w : C)	\ar[d]	\\
	(v : \pob(p),\, z : B) \ar@{=}[r]	&	(v : \pob(p),\, z : B) }
\end{equation}
where the right arrow is the dependent projection dropping the variable $w$.
Commutativity of~\eqref{sc2wspob:lp} implies
\[	d = (x, y, y, \refl(y), z', d_C),	\]
for some context morphism $d_C$ such that the judgment
\[	(x : X,\, y : E, \, z' : B\subst{x, y, y, \refl(y)}{v})\ d_C : C\subst{x, y, y, \refl(y),z'}{v,z}.	\]
holds.
Applying the elimination for identity contexts to it, we obtain
\[	\J(y_1, y_2, u, \bnd{y,z'}d_C, z) : C 		\]
for $(x, y_1, y_2, u) : \pob(p)$ and $z : B$, and the computation rule yields
\[ \J(y, y, \refl(y),  \bnd{y,z'}d_C, z') = d_C : C\subst{x, y, y, \refl(y),z'}{v,z}. \]
for $x : X,\, y : E,\, z' : B\subst{x, y, y, \refl(y)}{v}$.
Thus the context morphism
\[ 	(v : \pob(p),\, z : B)\ (x, y_1, y_2, u, z, \J(y_1, y_2, u, \bnd{y,z'}d_C, z))
									: (v : \pob(p),\, z : B,\, w : C)	\]
is a diagonal filler for the lifting problem~\eqref{sc2wspob:lp}: commutativity of the lower triangle is obvious
whereas commutativity of the upper one follows from the computation rule.
Therefore, the tribe (\syntcid, \Dt) has path objects.

To see that this choice is weakly stable in the sense of \cref{def:wspobtr},
let $f \colon X \to Y$ be a context morphism.
According to the choice for the pullback square in~\eqref{sc2wspob:plbk}, we have
\[	\bsch{f}{E} = (x : X,\, z' : E\subst{f}{y}),	\quad
			\bsch{f}{p} = (x),	\quad\text{and}\quad	\bsch{p}{f} = (f, z'),	\]
while the pullback object of $\pob(p)$ along $f$ is
\[\begin{split}
\bsch{f}{\,\pob(p)}	&	=			%
		 (x : X,\, z'_1 : E\subst{f}{y},\, (z_2 : E,\, u' : \Id{E}{z_1, z_2})\subst{f,z'_1}{y,z_1})	\\
	&	=	(x : X,\, z'_1 : E\subst{f}{y},\, z'_2 : E\subst{f}{y},\, %
							u' : (\Id{E}{z_1, z_2})\subst{f,z'_1,z'_2}{y,z_1,z_2})
\end{split}\]
and the two associated base change arrows are
\[	\bsch{f}{(p.\partial_p^0)} = (x)		\qquad\text{and}\qquad
					\bsch{(p.\partial_p^0)}{f} = (f, z'_1, z'_2, u').	\]
Therefore, from the construction in the syntactic category of the universal arrow of a pullback, we have
\[\begin{split}
\univ{\bsch{f}{p}, \, \rar_p.\bsch{p}{f}}	& =	\univ{(x), \, (f,z',z',(\refl_E(z))\subst{f,z'}{y,z})}	\\
						& =	(x, z', z', (\refl_{E}(z))\subst{f,z'}{y,z})
\end{split}\]
and
\[\begin{split}
\univ{\bsch{f}{(p.\partial_p^0)}, \, \partial_p.\bsch{(p.\partial_p^0)}{f}}	%
									& =	\univ{(x), \, (f, z'_1, z'_2)}	\\
									& =	(x, z'_1, z'_2)
\end{split}\]
On the other hand, the factorization of $\Delta_{\bsch{f}{p}}$ is given by
\[	\pob(\bsch{f}{p}) = (x : X,\, z'_1 : E\subst{f}{y},\, z'_2 : E\subst{f}{y},\,	%
									 u' : \Id{E\subst{f}{y}}{z'_1, z'_2})	\]
\[	\rar_{\bsch{f}{p}} = (x, z', z', \refl_{E\subst{f}{y}}(z'))		\qquad\text{and}\qquad	%
									\partial_{\bsch{f}{p}} = (x, z'_1, z'_2) \]
and the substitution rules for identity types ensure
\[	(\Id{E}{z_1, z_2})\subst{f,z'_1,z'_2}{y,z_1,z_2}	=	\Id{E\subst{f}{y}}{z'_1, z'_2}	\]
and
\[	(\refl_{E}(z))\subst{f,z'}{y,z} = \refl_{E\subst{f}{y}}(z').		\]
Therefore $\bsch{f}{\,\pob(p)} = \pob(\bsch{f}{p})$ and it suffices to take the identity context morphism
as the arrow $i$ in order to make diagram~\eqref{wspobtr:sq} commute.
\end{proof}

The third substitution rule for identity types, which we did not use here, is needed in the case a choice of
diagonal filler is assumed in the definition of tribe with path objects and such choice required to be stable
under pullback in the definition of tribe with weakly stable path objects.
We did so in~\cite{Em13}, also assuming a stronger notion of stability under pullback of the structure
given by path objects.
This assumption amounts, in the current formulation, to requiring the arrow $i$ of \cref{def:wspobtr}
to be in isomorphism.

Let us conclude this section by showing that the weak factorization system given by \cref{thm:wfs}
is precisely the identity type weak factorization system.
The two classes of arrows \Lt and \Rt are obviously the same.
Consider then a context morphism $f \colon X \to Y$.
The factorization in \cref{thm:wfs} is obtained through the mapping path object of $f$
over the terminal arrow $Y \to ()$:
\[\xymatrix@C=4em@R=3em{
	X	\ar@/_1.5pc/[ddr]_{\id_X}	\ar[dr]^{\univ{\id_X,\, \rar_Y.f}} \ar@/^1.5pc/[drr]^{\rar_Y.f}	&&\\
	&	\mpob(f)	\ar[d]^{\PM_0}	\ar[r]^{\PM_1}		&	\pob(Y)	\ar[d]^{\partial_Y}	\\
	& 	X			\ar[r]^f			&	Y				}\]
where the mapping path object is the context
\[	\mpob(f) = (x : X, y : Y, u : \Id{Y}{f, y})	\]
because of the choice of pullback and the first substitution rule for identity types.

The arrow $\partial^1_Y \PM_1$ is the dependent projection
\[	(y)	\colon	(x : X, y : Y, u : \Id{Y}{f, y})	\to	Y	\]
and the universal arrow $\univ{\id_X, \rar_Y.f} \colon X \to \mpob_p(f)$ is
\[	\univ{(x), (f, f, \refl_Y(f))}	=	(x, f, \refl_Y(f)).	\]
We have then obtained exactly the factorization of $f$ described in~\eqref{eq:itwfs}.
Thus, the weak factorization system on (\syntcid, \Dt) given by \cref{thm:wfs}
is the identity type weak factorization system.

\section{Path object categories}								\label{sec:pobc}

A path object category is a notion introduced by Benno van den Berg and Richard Garner in~\cite{vdBG11}.
In that paper they first prove that every path object category is a model of \mltt with identity types,
and prove then that the categories of groupoids, of chain complexes over a ring,
of topological spaces and of simplicial sets are path objects categories.
As they say, such structure has been defined 
in order to have an abstract characterization of a number of common features shared by those categories,
features which are instrumental in showing that they provide models of \mltt with identity types.

\begin{defin}											\label{def:pobc}
A \emph{path object category} is a finitely complete category \catt equipped with the following structure:
\begin{enumerate}[label=(\emph{\alph*})]
\item An endofunctor $\Mm \colon \catm \to \catm$ which preserves pullbacks, and natural transformations			\label{pobc:a}
	\[	\rar \colon \Idf \natto	\Mm,	\qquad	\src,\trg \colon \Mm \natto \Idf,	\qquad
		\cmp \colon \univ{\Mm, \Mm} \natto \Mm	\qquad\text{and}\qquad	\tau \colon \Mm \natto \Mm	\]
	such that, for every object $X$ of \catt, the diagram
	\[\xycenter{=4em}{
		X 	\ar[r]|-{\rar_X}	&	\Mm X 	\ar@<-1ex>[l]_{\src_X}	\ar@<1ex>[l]^{\trg_X}	&
			\Mm X \presc{\src_X}{\times_{\trg_X}} \Mm X		\ar[l]_(.6){\cmp_X}		}\]
	is an internal category, and the arrow $\tau_X \colon \Mm X \to \Mm X$ is an involution
	which defines an internal identity-on-objects isomorphism between $\Mm X$ and its opposite category.
\item A strength for the endofunctor \Mt~\cite{Ko70}						\label{pobc:b}
	\[	\alpha_{X,Y} \colon \Mm X \times Y \to \Mm(X \times Y)	\]
	with respect to which $\rar, \src, \trg, \cmp$ and $\tau$ are strong natural transformation.
\item A strong natural transformation $\eta \colon \Mm \natto \Mm\Mm$ such that			\label{pobc:c}
	\begin{align*}
	\src_{\Mm X}.\eta_X 	& = \id_X,					\\
	\trg_{\Mm X}.\eta_X 	& = \rar_X.\trg_X,				\\
	\Mm\src_X.\eta_X 	& = \id_X,					\\
	\Mm\trg_X.\eta_X 	& = \alpha_{\term,X}.\univ{\Mm !, \trg_X},	\\
	\eta_X.\rar_X 		& = \rar_{\Mm X}.\rar_X.
	\end{align*}
\end{enumerate}
\end{defin}

The motivating example that van den Berg and Garner have in mind is the category of topological spaces 
where the endofunctor \Mt is intended to provide a choice of path space for every space $X$.
However, the obvious choice $\Mm X \coloneqq X^{[0,1]}$ fails to satisfy the category axioms they require in
point~\ref{pobc:a}.
For this reason, they take instead $\Mm X$ to be the \emph{Moore path space},
i.e.\ the set of paths with arbitrary lengths
\[	\Set{ (l, \phi) \in \preal \times X^{\preal}	|	(\forall t \geq l) \phi(t) = \phi(l) },	\]
where $\preal$ denotes the set of non-negative real numbers, equipped with the topology induced
by the euclidean topology on $\preal$ and the compact-open topology on $X^{\preal}$.
The category structure is then given as follows.
The identity $\rar_X(x)$ is the constant path at $x \in X$ of length $0$.
Source $\src_X$ and target $\trg_X$ of a path $(l, \phi)$ are $\phi(0)$ and $\phi(l)$, respectively.
The composition $\cmp_X$ of two paths $(l,\phi)$ and $(m, \psi)$ such that $\phi(l) = \psi(0)$
is the path $\chi$ of length $l + m$ defined by
\[	\chi(t) \coloneqq
\begin{cases}
\phi(t),	&	\text{if $t \leq l$,}	\\
\psi(t - l),	&	\text{if $t \geq l$.}
\end{cases}\]
Finally, the involution $\tau_X$ is the operation which revert paths,
that is, takes a path $(l, \phi)$ to the path $\phi^o$ of same length defined by
\[	\phi^o(t) \coloneqq
\begin{cases}
\phi(l - t),	&	\text{if $t \leq l$,}	\\
\phi(0),	&	\text{if $t \geq l$.}
\end{cases}\]

The notion of strength $\alpha_{X,Y} \colon \Mm X \times Y \to \Mm(X \times Y)$ captures the idea
of a constant path of arbitrary length.
Indeed, such a strength is determined, up to natural isomorphism, by the components of the form $\alpha_{\term, X}$.
Now, in the topological example, $\Mm\term$ is homeomorphic to $\preal$,
therefore we can think of $\alpha_{\term,X}$ as taking a length $l \in \preal$ and a point $x \in X$
to the path of length $l$ constant at $x$.
The component $\alpha_{X,Y}$ then takes a pair $((l, \phi), y)$ to the path $(l, \psi)$ in $X \times Y$
defined by $\psi(t) \coloneqq (\phi(t), y)$.

This strength is needed to encode the last piece of structure, that is, a way to contract a path onto its endpoint.
This is done by requiring the existence of the natural transformation $\eta$ in point~\ref{pobc:c}.
In the topological example, such a contraction is given by a path of paths $\eta_X(\phi)$ of the following form
\[\xycenter{=2.5em@C=3.5em}{
	x_0	\ar[d]_\phi	\ar[r]^{\Mm \src_X.\eta_X(\phi)}	&	x_1	\ar[d]^{\rar_X(x_1)} \\
	x_1	\ar[r]_{\Mm \trg_X.\eta_X(\phi)}	\ar@{}[ur]|-{\displaystyle{\overset{\eta_X(\phi)}{\Longrightarrow}}}	&	x_1	}\]
with $\src_{\Mm X}.\eta_X(\phi) = \phi$ and $\trg_{\Mm X}.\eta_X(\phi) = \rar_X(x_1)$.
To make such diagram commute,
one might try to take $\Mm \src_X.\eta_X(\phi) = \phi$ and $\Mm \trg_X.\eta_X(\phi) = \rar_X(x_1)$.
The problem arises from the fact that $\Mm\src_X$ and $\Mm\trg_X$ preserve path lengths,
but the length $l$ of $\phi$ is in general not equal to $0$.
The solution is then given, in the topological case, by the path of length $l$ constant at $x_1$ and,
in the abstract case, by the strength $\alpha_{\term,X}$ (see the fourth equation in point~\ref{pobc:c}).

We now show that a path object category is, in particular, a tribe with weakly stable path objects,
thus providing the latter with a number of example.

The argument is essentially the same as that one in Section 6.2 of \cite{vdBG11}.

\begin{theor}										\label{thm:pobc2wspob}
A path object category is a tribe with weakly stable path objects,
whose class \At is formed by all the arrows which have a transport arrow.
\end{theor}
\begin{proof}
Let \catt be a path object category.
In order to show that it is a tribe we first observe that, since it is finitely complete,
it has a terminal object and arbitrary pullbacks,
and define \At to be the class of arrows which have a transport arrow.
It is easy to see that a transport arrow for $\bsch{f}{p} \colon \bsch{f}{E} \to X$ is
$\univ{\trg_X.\PM_1, j \univ{\bsch{p}{f}.\PM_0, \Mm f.\PM_1}} \colon \mpob(\bsch{f}{p}) \to \bsch{f}{E}$,
where $j$ is a transport arrow for $p$,
whereas transport arrows for a terminal $X \to \term$ and for an iso $f \colon X \to Y$ are
$\PM_0 \colon X \times \Mm\term \to X$ and $\trg_X.\Mm f^{-1}.\PM_1 \colon \mpob(f) \to X$, respectively.

Let $p \colon E \to Y$ be an arrow in \At and define $\pob(p)$ as the pullback object in
\begin{equation}									\label{pobc2wspob:pob}
\xycenter{=3em@C=4em}{
	E	\ar@/_/[ddr]_{\univ{\rar_\term.!,\, p}}	\ar@{..>}[dr]	\ar@/^/[rrd]^{\rar_E}& &	\\
	&	\pob(p)		\ar[d]^{u_p}	\ar[r]_{v_p}		&	\Mm E	\ar[d]^{\Mm p}	\\
	&	\Mm\term \times Y	\ar[r]_{\alpha_{\term,Y}}	&	\Mm Y			}
\end{equation}
$\rar_p$ as the universal arrow in \eqref{pobc2wspob:pob} and
$\partial_p \coloneqq \univ{\src_E.v_p, \trg_E.v_p} \colon \pob(p) \to E \times_p E$.
This yields a factorization of the diagonal $\Delta_p$,
and the proof that $\partial_p \in \Am$ and $\rar_p \boxslash \Am$
is the content of Proposition 6.2.2 of \cite{vdBG11}.

The fact that every base change of  $\rar_p$ along an arrow in \At is in ${}^\boxslash\!\Am$ follows from
the fact that this choice of factorization is Frobenius,
i.e.\ ${}^\boxslash\!\Am$ is closed under base change along arrows in \At,
which is proved in Proposition 6.3.1.

Finally, the weak stability follows from Proposition 6.2.5, where van den Berg and Garner prove that
there is a natural isomorphism between $\bsch{f}{\,\pob(p)}$ and $\pob(\bsch{f}{p})$
which is compatible with $\rar_{\bsch{f}{p}}$ and $\partial_{\bsch{f}{p}}$ as required.
\end{proof}

\bibliographystyle{plain}
\bibliography{biblio-cat-id-ty}

\end{document}